\newtheorem{prop}{Proposition}
\newtheorem{thm}{Theorem}
\newtheorem{lem}{Lemma}
\theoremstyle{definition}
\newtheorem*{rem}{Remark}
\newcommand{\ve}{{\varepsilon}}
\newcommand{\vt}{{\vartheta}}
\newcommand{\ml}{{\mathcal L}}
\newcommand{\ul}{\underline}
\title{Gaps between zeros of Dedekind zeta-functions\\ of quadratic number fields. II}
\author{H. M. Bui}
\affil{School of Mathematics, University of Manchester, Manchester M13 9PL, UK, hung.bui@manchester.ac.uk}
\author{Winston P. Heap}
\affil{Department of Mathematical Sciences, Norwegian University of Science and Technology, 
NO-7491 Trondheim, Norway, winstonheap@gmail.com}
\author{Caroline L. Turnage-Butterbaugh}
\affil{Department of Mathematics, Duke University, Durham, NC 27708 USA, cturnagebutterbaugh@gmail.com}
\date{}
\begin{document}
\allowdisplaybreaks
\maketitle

\begin{abstract}
Let $K$ be a quadratic number field and $\zeta_K(s)$ be the associated Dedekind zeta-function. We show that there are infinitely many gaps between consecutive zeros of $\zeta_K(s)$ on the critical line which are greater than $2.866$ times the average spacing. 
\end{abstract}

%%%%%%%%%%%%%% INTRODUCTION %%%%%%%%%%%%%%
\section{Introduction}

Let $K$ be a number field, and let $\mathcal{O}_K$ denote its ring of integers. The Dedekind zeta-function attached to $K$ is defined in the half-plane $\Re(s)>1$ by
\[
\zeta_K(s)= \sum_{\mathfrak{a}\subset \mathcal{O}_K}\frac{1}{N(\mathfrak{a})^s} = \prod_{\mathfrak{p}\subset \mathcal{O}_K}\left(1-\frac{1}{N(\mathfrak{p})^s}\right)^{-1},
\]
where $\mathfrak{a}$ and $\mathfrak{p}$ run over the nonzero ideals and primes ideals of $\mathcal{O}_K$, respectively. Let $K$ be a quadratic extension of $\mathbb{Q}$ with discriminant $D$. It is known that $\zeta_K(s)$ factors as
\[
\zeta_K(s) = \zeta(s)L(s,\chi_D),
\]
where $\zeta(s)$ is the Riemann zeta-function and $L(s,\chi_D)$ is the Dirichlet $L$-function associated to $\chi_D$. We denote the modulus of $\chi_D$ by $q$, which is given by the formula
\begin{equation*}
q=\begin{cases}4|D| &\text{if}\,\,D\equiv 2 (\!\!\!\!\!\!\mod 4), \\ |D|&\text{otherwise.}
\end{cases}\end{equation*}

Let $0\leq\gamma_1\leq\gamma_2\leq\ldots\leq\gamma_n\leq\ldots$ denote the imaginary parts of the nontrivial zeros of $\zeta_K(s)$ in the upper half-plane. Also, let $t_n$ be the imaginary part of the $n$-th zero of $\zeta_K(s)$ of the form $\rho=1/2+it$, $t>0$. In this article, we study the vertical distribution between the nontrivial zeros of $\zeta_K(s)$. For $T\ge 2$, it is well-known that
\begin{equation*}
N_K(T):= \sum_{0 < \gamma \le T}1 = \frac{T\ml}{\pi}- \frac{T}{\pi} + O\left(\ml\right),
\end{equation*}
where
\[
\ml=\log\frac{\sqrt{|D|}T}{4\pi^2}.
\]
Hence the average size of $\gamma_{n+1}- \gamma_{n}$ is $\pi / \log(\sqrt{|D|}\gamma_n)$. Normalizing, let
\[
\lambda_K := \limsup_{n\to \infty} \frac{\gamma_{n+1}- \gamma_{n}}{\pi / \log(\sqrt{|D|}\gamma_n)} \qquad \text{and} \qquad \Lambda_K := \limsup_{n\to \infty} \frac{t_{n+1}- t_{n}}{\pi / \log(\sqrt{|D|}t_n)}.
\]
Note that the Generalized Riemann Hypothesis for $\zeta_K(s)$ implies $\lambda_K \!=\! \Lambda_K$. By definition, we have that $\Lambda_K \!\ge\! \lambda_K \!\ge\! 1$, but it is believed that $\Lambda_K \!=\! \lambda_K \!=\! \infty$. Using a method of Hall \cite{Hall} and some ideas of Bredberg \cite{Bredberg}, Turnage-Butterbaugh \cite{CTB} has shown that $\Lambda_K \ge \sqrt{6}=2.449\ldots$. Our main theorem is an improvement upon this bound on $\Lambda_K$.

\begin{thm}\label{thm:main}
We have $\Lambda_K > 2.866$. Thus, under the Generalized Riemann Hypothesis for $\zeta_K(s)$, we have $\lambda_K > 2.866$. \end{thm}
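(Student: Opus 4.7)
The plan is to apply Hall's mean-value method based on Wirtinger's inequality, refined by a Dirichlet polynomial mollifier together with a shift parameter, generalizing the approach of Turnage-Butterbaugh \cite{CTB} along the lines that Bredberg \cite{Bredberg} used for the Riemann zeta-function. The starting point is the inequality
\[
\int_a^b |f(t)|^2\,dt \;\le\; \frac{(b-a)^2}{\pi^2}\int_a^b |f'(t)|^2\,dt,
\]
valid for any $f\in C^1([a,b])$ vanishing at the endpoints. I would apply it to $f(t) = e^{i\alpha \ml t}\,\zeta_K(\tfrac12+it)\,A(\tfrac12+it)$ on each interval $[t_n,t_{n+1}]$ of consecutive critical zeros, where $A$ is a Dirichlet polynomial mollifier and $\alpha\in\R$ is a shift parameter to be optimized. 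Summing over $T\le t_n\le 2T$ and noting that if $\Lambda_K$ were less than some constant $C$ then every gap would eventually satisfy $t_{n+1}-t_n < C\pi/\ml$, the resulting contradiction after optimizing in $\alpha$ yields
\[
\Lambda_K^2 \;\ge\; \frac{\ml^2\,I_1^{\,2}}{I_1 I_2 - (\operatorname{Im} J)^2}\,(1+o(1)),
\]
with $I_1 = \int_T^{2T}|\zeta_K A|^2\,dt$, $\ I_2 = \int_T^{2T}|(\zeta_K A)'|^2\,dt$, and $J = \int_T^{2T}\overline{\zeta_K A}\,(\zeta_K A)'\,dt$.

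The main computational task is to compute $I_1, I_2, J$ asymptotically as $T\to\infty$. These are twisted fourth-moment integrals of the product $\zeta(\tfrac12+it)L(\tfrac12+it,\chi_D)$. I would take a mollifier of the shape
\[
A(s) = \sum_{n\le T^\theta}\frac{a_n}{n^s},\qquad a_n = \sum_{n=hk}\mu(h)\,d_K(k)\,P\!\left(\frac{\log(T^\theta/n)}{\log T^\theta}\right),
\]
where $d_K(n)=\sum_{d\mid n}\chi_D(d)$ is the coefficient of $\zeta_K$ (so that $A$ partially inverts $\zeta_K$), $\theta<1/2$ is chosen to keep the evaluation unconditional, and $P$ is a polynomial to be optimized. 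Using established techniques for twisted mean values of $\zeta\cdot L$ (shifted divisor sums and contour integration \`a la Motohashi, Heath-Brown, Conrey--Iwaniec), the diagonal and off-diagonal contributions should combine to express each of $I_1,I_2,J$ as an explicit quartic form in the coefficients of $P$, up to a negligible error.

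The final step is to maximize $\ml^2 I_1^{\,2}/(I_1 I_2-(\operatorname{Im} J)^2)$ over polynomials $P$ of some fixed degree, a finite-dimensional constrained problem tractable numerically. The main obstacle will be the twisted fourth-moment calculation itself: because the character $\chi_D$ couples the two factors of $\zeta_K=\zeta L$, the shifted divisor sums governing the off-diagonal main terms are more intricate than in the $\zeta^2$ case handled by Bredberg, and every lower-order correction of size $O(\ml^{-1})$ must be tracked to produce a sharp constant. Provided these asymptotic expansions are obtained, the additional flexibility from the shift $\alpha$ and the polynomial $P$ should push the numerical value above $(2.866)^2$, improving the previous bound $\sqrt{6}=2.449\ldots$ to $\Lambda_K>2.866$ as claimed.
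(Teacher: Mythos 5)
Your framework --- Wirtinger's inequality applied to $e^{i\alpha\ml t}\,\zeta_K(\tfrac12+it)\,A(\tfrac12+it)$, summing over consecutive zeros, optimizing a shift and a Dirichlet polynomial, and landing on the ratio $\ml^2 I_1^2/(I_1I_2-(\operatorname{Im}J)^2)$ --- is exactly the scaffolding the paper uses (their $\nu$ plays the role of your $\alpha$, and optimizing over $\nu$ reproduces your $J$-formula). But three of your specific choices are genuinely off.

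First, the Dirichlet polynomial. You propose $a_n=\sum_{hk=n}\mu(h)d_K(k)\,P(\cdot)$ ``so that $A$ partially inverts $\zeta_K$.'' That is the wrong direction: Hall's method detects large gaps by \emph{amplifying} $\zeta_K$ near its large values, not by flattening it. (Also, $\sum_{hk=n}\mu(h)d_K(k)$ is the Dirichlet coefficient of $\zeta_K/\zeta = L(\cdot,\chi_D)$, so your $A$ neither mollifies $\zeta_K$ nor emulates it; it just picks up $L$.) The paper instead takes
\[
M(s)=\sum_{h_1h_2\le y}\frac{d_r(h_1)\,d_r(h_2)\chi_D(h_2)\,P[h_1h_2]}{(h_1h_2)^s},
\]
which is a truncation of $\zeta(s)^rL(s,\chi_D)^r=\zeta_K(s)^r$, i.e.\ an amplifier with tunable level $r$. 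With a mollifier, the ratio $\int|f|^2/\int|f'|^2$ does not deliver a large-gap lower bound.

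Second, the length claim. You write that $\theta<1/2$ ``keeps the evaluation unconditional.'' That is far beyond what is known. The integrals here are essentially degree-four, character-twisted moments (of $\zeta\cdot L$), not the degree-two twisted second moment of $\zeta$ for which longer twists are available. The state of the art is $\vartheta<1/4$, which is precisely the recent theorem of Bettin--Bui--Li--Radziwi\l\l\ (\cite{BBLR}) that the paper invokes; the earlier result of Heap \cite{Heap} only reached $\vartheta<1/11$ and yields the weaker $\Lambda_K\ge\sqrt{6}$ of \cite{CTB}. Asserting $\theta<1/2$ would make the problem trivially easy and is not supported by any existing moment theorem. Related to this, your appeal to generic ``shifted divisor sums and contour integration \`a la Motohashi, Heath-Brown, Conrey--Iwaniec'' glosses over the actual bottleneck: there is a specific, recent theorem that must be cited and applied (Theorem~\ref{twists}, restated from \cite{BBLR}); without it the whole calculation collapses.

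Finally, the paper keeps all six terms from the twisted-moment formula of \cite{BBLR}, shows two contribute a lower order, permutes the shifts across the remaining four, and reduces $I(\underline\alpha,\underline\beta)$ to the explicit multiple integral $c(\underline\alpha,\underline\beta)$ of Proposition~\ref{prop:key}; the ratio is then evaluated with $\vartheta=1/4$, $r=1$, a degree-four polynomial $P$, and $\nu=1.2773$. Your outline omits this entire computation and, because of the amplifier/length issues above, would not reproduce the $>2.866$ constant even in principle.
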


\begin{rem}
Theorem 1 of \cite{CTB} contains a small error, but the proof and result $\Lambda_K \ge2.449\ldots$ are valid after making an appropriate modification, which we now explain. The theorem states that for $\varepsilon\!>\!0$ and $|D|\!<\!T^{7/9-\varepsilon}$, we have $\Lambda_K \ge2.449\ldots$, however, we must require $D \ll\! T^{\varepsilon}$ for the result $\Lambda_K \ge2.449\ldots$ to hold. The mistake arises in the third line of the proof of Theorem 3 of \cite{CTB}, where it is stated that for $\alpha\!+\!\beta \!\ll\! 1/\log(\sqrt{|D|}T)$, we have $D^{-(\alpha\!+\!\beta)} \!=\! 1 + O((\alpha\!+\!\beta)|D|^\epsilon)$. The correct estimate is, rather,  $D^{-(\alpha+\beta)} = 1 + O((\alpha\!+\!\beta)\log |D|)$. Thus we require  $D\ll T^{\varepsilon}$ so that the $O$-term is smaller than the main term. By inserting the correct estimate for $D^{-(\alpha+\beta)}$ in the argument, the proof is valid. In short, the statement of Theorem 1 of \cite{CTB} should read as follows: for $\varepsilon\!>\!0$ and $D\!\ll\! T^{\varepsilon}$, we have $\Lambda_K \ge2.449\ldots$. We note that Theorem \ref{thm:main} above holds for $D\ll T^{\varepsilon}$ as well.
\end{rem} 

The weaker lower bound on $\Lambda_K$ obtained in \cite{CTB} is derived using the mixed second moments of the derivatives of $\zeta_K(1/2+it)$. Those moments are obtained via a special case of a result of Heap  \cite[Theorem 1]{Heap} on the twisted second moment of the Dedekind zeta-function. Combining Hall's method with the full strength of Heap's result would likely improve the bound derived in \cite{CTB}. In the present article, however, we appeal to a recent improvement of Heap's result due to Bettin {\it et al.} \cite{BBLR} which allows one to consider a longer amplifier. Indeed, in \cite[Corollary 1]{Heap}, the Dirichlet polynomial has length  $T^{\vartheta},\, \vartheta \!<\! 1/11$, and in \cite{BBLR}, one may take the length of the Dirichlet polynomial to be $T^{\vartheta},\, \vartheta \!<\! 1/4.$ 

The analogous problem for demonstrating large gaps relative to the average spacing between critical zeros of the Riemann zeta-function, $\zeta(s)$, has been extensively studied (see, for example, \cite{Bredberg, Bui, Bui2, HBgaps, BMN, CGG, CGG2, FW, Hall1, Hall, MO, Mu, Ng}). To place our result in context, we note that, to date, the strongest result for $\zeta(s)$ is due to Bui and Milinovich \cite{HBgaps}, who have shown, assuming the Riemann Hypothesis, that there are infinitely many critical zeros of $\zeta(s)$ which are at least 3.18 times the average spacing. Very recently the problem has also been studied for critical zeros of a GL(2) $L$-function, $L(s,f)$, where $f$ may be taken as either a primitive (holomorphic or Maass) cusp form. Barrett {\it et al.} \cite{Betal}, assuming the Generalized Riemann Hypothesis, have shown that there are infinitely many critical zeros of $L(s,f)$ which are at least $\sqrt{3}=1.732\ldots$ times the average spacing. Both of these results make use of Hall's method, which we detail in the next section.

%%%%%%%%%%%%%% HALL'S METHOD %%%%%%%%%%%%%%
\section{A lower bound on $\Lambda_K$}
In this section, we outline Hall's method to obtain the bound appearing in Theorem \ref{thm:main}. We use a similar setup as Bui and Milinovich \cite{HBgaps} in the construction of our argument, which in turn follows Section 2 of Bredberg \cite{Bredberg}.

\begin{thm}[Wirtinger's inequality]\label{wirtinger} Let $f:[a,b]\to \mathbb{C}$ be a continuously differentiable function, and suppose that  $f(a)\!=\!f(b)\!=\!0$. Then
\[\int_a^b|f(x)|^2dx\leq \Big(\frac{b-a}{\pi}\Big)^2\int_a^b|f^\prime(x)|^2 dx.\]
\end{thm}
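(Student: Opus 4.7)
The plan is to prove Wirtinger's inequality via a Fourier sine series, after reducing to the canonical interval $[0,\pi]$.

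First, I would rescale via the linear change of variable $y = \pi(x-a)/(b-a)$, setting $g(y) = f(a + (b-a)y/\pi)$. A routine computation shows that the stated inequality for $f$ on $[a,b]$ is equivalent to
$$\int_0^\pi |g(y)|^2\, dy \leq \int_0^\pi |g'(y)|^2\, dy$$
for any continuously differentiable $g:[0,\pi]\to\mathbb{C}$ with $g(0)=g(\pi)=0$, since the Jacobian and the chain rule combine to produce exactly the factor $(b-a)^2/\pi^2$ between the two sides.

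Next, I would extend $g$ to $[-\pi,\pi]$ by odd reflection, $g(-y):=-g(y)$. The boundary conditions make this extension continuous at $0$ (since $g(0)=0$) and ensure matching values at $\pm\pi$ (since $g(\pi)=0$), so the $2\pi$-periodic continuation is continuous on $\mathbb{R}$ and piecewise continuously differentiable. Being odd, it admits a Fourier sine expansion $g(y)=\sum_{n\geq 1} b_n \sin(ny)$ which converges in $L^2[0,\pi]$. An integration by parts, with the boundary terms vanishing thanks to $g(0)=g(\pi)=0$, identifies the $L^2$ cosine-series coefficients of $g'$ as $nb_n$, so that $g'(y)=\sum_{n\geq 1} nb_n\cos(ny)$ in $L^2[0,\pi]$.

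Parseval's theorem then gives
$$\int_0^\pi |g(y)|^2\, dy = \frac{\pi}{2}\sum_{n=1}^\infty |b_n|^2 \quad \text{and} \quad \int_0^\pi |g'(y)|^2\, dy = \frac{\pi}{2}\sum_{n=1}^\infty n^2|b_n|^2,$$
and the termwise bound $|b_n|^2\leq n^2|b_n|^2$ for $n\geq 1$ yields the reduced inequality. Reversing the change of variable completes the proof. The step that most deserves care is the identification of the Fourier coefficients of $g'$: it relies essentially on the Dirichlet boundary conditions to kill the integration-by-parts boundary terms, which is precisely what eliminates the zeroth Fourier mode and makes the comparison $n^2\geq 1$ sharp (with equality exactly when $g(y)=c\sin y$, i.e. $f(x)=c\sin(\pi(x-a)/(b-a))$).
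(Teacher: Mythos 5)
Your proof is correct. The paper does not actually prove this statement; it simply cites Bredberg's thesis (Corollary~1 there), so there is no in-paper argument to compare against line by line. Your Fourier sine series proof is the classical self-contained route to the Dirichlet form of Wirtinger's inequality, and every step checks out: the affine change of variable $y=\pi(x-a)/(b-a)$ reduces the claim to $\int_0^\pi|g|^2\le\int_0^\pi|g'|^2$ with $g(0)=g(\pi)=0$; the odd $2\pi$-periodic extension is continuous precisely because of those boundary conditions; integration by parts (with vanishing boundary terms, again from $g(0)=g(\pi)=0$) identifies the cosine coefficients of $g'$ as $nb_n$ and, crucially, forces the zeroth cosine coefficient of $g'$ to vanish since $\int_0^\pi g'=g(\pi)-g(0)=0$; and Parseval then reduces the inequality to the termwise bound $n^2\ge 1$. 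You also correctly note the equality case $f(x)=c\sin\bigl(\pi(x-a)/(b-a)\bigr)$, which shows the constant $(b-a)^2/\pi^2$ is sharp. The only thing your argument adds over the paper is self-containedness: the paper outsources the proof, while you supply it.
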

\begin{proof}
This is a variation of Wirtinger's inequality given by Bredberg (see \cite[Corollary 1]{Bredberg}).
\end{proof}
\smallskip

Fix $K$. Suppose, towards a contradiction, that $\Lambda_K \le \kappa$, where $\kappa$ is some real number. Let
\[
f(t):=e^{i\nu\ml t}\zeta(\tfrac{1}{2}\!+\!it)L(\tfrac{1}{2}\!+\!it,\chi_D)M(\tfrac{1}{2}\!+\!it),
\]
where $\nu$ is a real constant that will be chosen later. Here we choose $M(s)$ to be an amplifier of the form
\[
M(s)=\sum_{h_1h_2\leq y}\frac{d_r(h_1)d_r(h_2)\chi_D(h_2)P[h_1h_2]}{(h_1h_2)^s},
\]
where $y=T^{\vartheta},\, 0 < \vartheta < 1/4,\, r \in \mathbb{N},$ and $d_r(h)$ denotes the coefficients of $\zeta(s)^r$. Moreover, we use the notation
\[
P[h]=P\Big(\frac{\log y/h}{\log y}\Big)
\]
for $1 \le h \le y$, where $P(x) = \sum_{j\ge 0}b_jx^j$ is a polynomial. By convention, we set $P[h]= 0$ for $h \ge y$. Thus, for all $h$,  we have
\begin{equation}\label{P integral}
P[h]=\sum_{j\geq 0}\frac{b_j j!}{(\log y)^j}\frac{1}{2\pi i}\int_{(1)}\Big(\frac{y}{n}\Big)^s \frac{ds}{s^{j+1}}.
\end{equation}
Here, and throughout the article, the notation $\int_{(c)}$ means $\int_{c-i\infty}^{c+i\infty}.$

Denote all of the zeros of the function $f(t)$ in the interval $[T,2T]$ by $\widetilde{t}_1 \le \widetilde{t}_2 \le \ldots \le \widetilde{t}_N$. By our assumption, we have
\[
\widetilde{t}_{n+1}-\widetilde{t}_{n} \le \big(1+o(1)\big)\frac{\kappa \pi}{\ml}
\]
as $T \to \infty$ for $n=0,1,\ldots,N$. Here we define $\widetilde{t}_0=T$ and $\widetilde{t}_{N+1}=2T$. By Theorem \ref{wirtinger}, we have
\begin{equation}\label{eq:bywirtinger}
\int_{\widetilde{t}_n}^{\widetilde{t}_{n+1}}|f(t)|^2dt \leq \big(1+o(1)\big)\frac{\kappa^2}{\ml^2}\int_{\widetilde{t}_n}^{\widetilde{t}_{n+1}}|f^\prime(t)|^2dt
\end{equation}
for $n=0,1,\ldots,N$. Summing \eqref{eq:bywirtinger} for all zeros in the range $[T,2T]$ gives
\[
\int_T^{2T}|f(t)|^2dt\leq \big(1+o(1)\big)\frac{\kappa^2}{\ml^2}\int_T^{2T}|f^\prime(t)|^2dt.
\]
Therefore, if
\[
h(D,\kappa,M):=\frac{\ml^2}{\kappa^2}\frac{\int_T^{2T}|f(t)|^2dt}{\int_T^{2T}|f^\prime(t)|^2dt} > 1,
\]
we may conclude that $\Lambda_K > \kappa$. Thus, the problem has been reduced to the study of the above mean values.

\subsection{Smoothed mean-value estimates}
To calculate the mean squares of $|f(t)|$ and $|f^\prime(t)|$ we consider the more general integral
\begin{multline}\label{Idef}
I(\underline{\alpha}, \underline{\beta}):=\int_{-\infty}^\infty \zeta(\tfrac{1}{2}\!+\!\alpha_1\!+\!it)L(\tfrac{1}{2}\!+\!\alpha_2\!+\!it,\chi_D)\zeta(\tfrac{1}{2}\!+\!\beta_1\!-\!it)
L(\tfrac{1}{2}\!+\!\beta_2\!-\!it,\chi_D)\\\times M(\tfrac{1}{2}\!+\!\alpha_3\!+\!it)M(\tfrac{1}{2}\!+\!\beta_3-it)\Phi\Big(\frac tT\Big)dt,
\end{multline}
where $\alpha_i, \beta_i \ll \ml^{-1}$ for $i=1,2,3$, $y=T^{\vt},\, \vt<1/4,$ and $\Phi(x)$ is a smooth function supported in $[1, 2]$ with derivatives
$\Phi^{(j)}(x) \ll_j T^\varepsilon$ for any $j\geq 0$. We shall prove the following proposition in Section \ref{propproof}.

\begin{prop}\label{prop:key}We have
\[
I(\ul{\alpha},\ul{\beta})=\frac{c(\ul{\alpha},\ul{\beta})A_{r+1}(\log y)^{2r^2+4r}\ml^2}{(2r^2-1)!((r-1)!)^4}\widehat{\Phi}(0)T+O(T\ml^{2r^2+4r+1}),
\]
where 
\[A_r=\prod_p\left(1-\frac{1}{p}\right)^{2r^2}\sum_{m=0}^\infty\frac{a_r(p^m)^2}{p^m},\]
with $a_r(n)$ the coefficients of $\zeta_K(s)^r$, and 
\begin{align*}
c(\ul{\alpha},\ul{\beta})&=\int\limits_{\substack{[0,1]^7\\x+x_1+x_2\leq 1\\x+x_3+x_4\leq 1}}y^{-(\alpha_3+\beta_3)x-\alpha_3(x_1+x_2)-\beta_3(x_3+x_4)-\beta_1x_1-\beta_2x_2-\alpha_1x_3-\alpha_2x_4}\notag\\
&\qquad\times[Ty^{-(x_1+x_3)}]^{-(\alpha_1+\beta_1)t_1}[qTy^{-(x_2+x_4)}]^{-(\alpha_2+\beta_2)t_2}\big(1\!-\!\vt(x_1\!+\!x_3)\big)\big(1\!-\!\vt(x_2\!+\!x_4)\big)\\
&\qquad\qquad\times x^{2r^2-1}(x_1x_2x_3x_4)^{r-1}P(1\!-\!x\!-\!x_1\!-\!x_2)P(1\!-\!x\!-\!x_3\!-\!x_4)dxdx_1dx_2dx_3dx_4dt_1dt_2.
\end{align*}
\end{prop}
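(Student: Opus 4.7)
My plan is to expand the amplifier square $M(\tfrac12+\alpha_3+it)M(\tfrac12+\beta_3-it)$ as a Dirichlet polynomial in $(h_1,h_2,k_1,k_2)$ and reduce $I(\ul\alpha,\ul\beta)$ to a multiple sum of shifted twisted second moments of $\zeta_K$. Since the twists have total modulus $(h_1h_2)(k_1k_2)\le y^2=T^{2\vartheta}$ with $2\vartheta<1/2$, the inner $t$-integral can be evaluated using the Bettin-Bui-Lee-Radziwill (BBLR) twisted second moment formula for $\zeta_K=\zeta\cdot L(\cdot,\chi_D)$. This replaces the inner integral by a CFKRS-type main term: a sum of four pieces indexed by $(\sigma_1,\sigma_2)\in\{0,1\}^2$, where $\sigma_1$ indicates whether the functional equation of $\zeta$ has been applied to swap $\alpha_1\leftrightarrow-\beta_1$ and $\sigma_2$ the analogous flag for $L(\cdot,\chi_D)$. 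The $(\sigma_1,\sigma_2)$ piece carries a factor $T^{-(\alpha_1+\beta_1)\sigma_1}(qT)^{-(\alpha_2+\beta_2)\sigma_2}$ from the gamma ratios together with an explicit Euler product and a Dirichlet series in the $h_i,k_j$.

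Next, I would substitute the contour representation \eqref{P integral} for each of $P[h_1h_2]$ and $P[k_1k_2]$, turning the sum over the $h_i,k_j$ into a multiple contour integral. The resulting Dirichlet series factors as a power of $\zeta$ and $L(\cdot,\chi_D)$ times an Euler product regular on a neighbourhood of $\Re(s)=1$. Shifting contours past the pole at $s=1$ and applying the standard formula $\frac{1}{2\pi i}\oint e^{sx}s^{-n}ds=x^{n-1}/(n-1)!$ produces the arithmetic constant $A_{r+1}$, the combinatorial prefactor $1/((2r^2-1)!((r-1)!)^4)$, and the simplex-type integrand $x^{2r^2-1}(x_1x_2x_3x_4)^{r-1}$ on $[0,1]^5$ after rescaling each contour variable by $\log y$. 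The weights $P(1-x-x_1-x_2)P(1-x-x_3-x_4)$ emerge directly from the polynomial $P$ in the definition of $P[\cdot]$ after this change of variables, and the simplex constraints $x+x_1+x_2\le 1$ and $x+x_3+x_4\le 1$ come from the restrictions $h_1h_2\le y$ and $k_1k_2\le y$.

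Finally, the four CFKRS swap terms combine via the identity
\[
\sum_{\sigma\in\{0,1\}}(-1)^\sigma X^{-(\alpha+\beta)\sigma}=(\alpha+\beta)(\log X)\int_0^1 X^{-(\alpha+\beta)t}\,dt,
\]
applied separately with $X_1=Ty^{-(x_1+x_3)}$ and $X_2=qTy^{-(x_2+x_4)}$. Since $\log X_1=\log T\cdot(1-\vartheta(x_1+x_3))$ and analogously for $X_2$, and since $\ml=\log T+O(\log|D|)$ is absorbable in the error when $D\ll T^\varepsilon$, the two applications of this identity produce the $\ml^2$ prefactor together with the weights $(1-\vartheta(x_1+x_3))(1-\vartheta(x_2+x_4))$ in $c(\ul\alpha,\ul\beta)$, while the inner integrals over $(t_1,t_2)\in[0,1]^2$ produce the bases $[Ty^{-(x_1+x_3)}]^{-(\alpha_1+\beta_1)t_1}[qTy^{-(x_2+x_4)}]^{-(\alpha_2+\beta_2)t_2}$.

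The most delicate step is the Dirichlet-series manipulation inside each CFKRS piece. One must carefully factorise the four-variable Dirichlet series as a product of $\zeta$'s, $L(\cdot,\chi_D)$'s, and regular Euler products, keeping track of the $\chi_D$-twists on $h_2,k_2$ and the way the swap for the $L$-factors interacts with the conductor $q$. Matching the arithmetic constant with the specific form $A_{r+1}=\prod_p(1-p^{-1})^{2(r+1)^2}\sum_m a_{r+1}(p^m)^2 p^{-m}$ requires a non-trivial local Euler-product identity, and matching the pole orders with the exponents $2r^2-1$ and $r-1$ requires careful bookkeeping of the $r$-dependence across the four swap pieces.
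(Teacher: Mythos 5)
Your high-level strategy---expand $M(\tfrac12+\alpha_3+it)\overline{M(\tfrac12+\overline{\beta_3}+it)}$, apply the BBLR twisted moment formula for $\zeta\cdot L(\cdot,\chi_D)$, insert the contour representation of $P[\cdot]$, factor the resulting Dirichlet series as an Euler product, shift contours to extract the polar contribution, and finally recombine the swap terms via the integral identity $1-X^{-(\alpha+\beta)}=(\alpha+\beta)(\log X)\int_0^1 X^{-(\alpha+\beta)t}\,dt$---matches the paper's argument essentially step for step, including the origin of the $\ml^2$ prefactor, the $\vartheta$-dependent weights, and the $(t_1,t_2)$-integrals.

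There is, however, one concrete gap. You describe the BBLR main term as ``a sum of four pieces indexed by $(\sigma_1,\sigma_2)\in\{0,1\}^2$,'' i.e.\ the swaps $\alpha_1\leftrightarrow-\beta_1$ inside $\zeta$ and $\alpha_2\leftrightarrow-\beta_2$ inside $L(\cdot,\chi_D)$. But the CFKRS-type main term for a shifted second moment of a product of two degree-one $L$-functions (equivalently, a degree-four object) carries $\binom{4}{2}=6$ swap terms, not $4$. The two you omit are the \emph{cross swaps} $\alpha_1\leftrightarrow-\beta_2$ and $\alpha_2\leftrightarrow-\beta_1$, which exchange a shift attached to $\zeta$ with one attached to $L(\cdot,\chi_D)$. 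These are exactly the terms $I_1'$ and $I_2'$ in the paper's Theorem~\ref{twists}; they carry the root factor $\varepsilon_{\chi_D}$ and, crucially, a modified divisor condition $h_1h_2m_1m_2 = q\,k_1k_2n_1n_2$ (or its mirror) rather than $h_1h_2m_1m_2 = k_1k_2n_1n_2$. Showing that these two terms are negligible is not a formality: after factorising their Dirichlet series one finds that the two factors $\zeta(1+\alpha_1+\beta_1+2s)\zeta(1+\alpha_2+\beta_2+2s)$ present in the diagonal terms are absent, so the residue computation at $s=0$ produces a strictly lower power of $\ml$. Your proof needs to record this extra contour analysis (or an equivalent argument) before the stated error term $O(T\ml^{2r^2+4r+1})$ can be claimed. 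The remainder of your sketch---including the acknowledged but unfinished local Euler-product check that $A(\underline{0},\underline{0},0,0)=A_{r+1}$, which the paper handles by separating split, inert and ramified primes---is consistent with the paper's proof.
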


The proof of Proposition \ref{prop:key} uses the main theorem of \cite{BBLR} (see Theorem \ref{twists} below). This gives $I(\ul{\alpha},\ul{\beta})$ as a sum of six terms, two of which lead to a  lower order contribution. The difference between the four main terms is simply a permutation of the shifts which  allows us to concentrate on a single term.  Using standard analytic techniques we compute this as a product of $(\log y)^{2r^2+4r}$ times a multiple integral dependent on the shifts. Upon combining all four main terms we gain the full expression for $c(\ul{\alpha},\ul{\beta})$ along with the extra factor of $\ml^2$.

Using Proposition \ref{prop:key}, we now prove the following smoothed mean-values of $f(t)$ and $f^\prime(t)$.

\begin{prop}\label{thm:smoothsquare}
Suppose $\vartheta < 1/4$. Then we have
\[
\int_{-\infty}^{\infty}|f(t)|^2\Phi\Big(\frac tT\Big)dt = \frac{c_0A_{r+1}(\log y)^{2r^2+4r}\ml^2}{(2r^2-1)!((r-1)!)^4}\widehat{\Phi}(0)T + O(T\ml^{2r^2+4r+1}),
\]
where
\begin{multline}
c_0:= c(\ul{0},\ul{0})=\int\limits_{\substack{[0,1]^7\\x+x_1+x_2\leq 1\\x+x_3+x_4\leq 1}}\big(1\!-\!\vt(x_1\!+\!x_3)\big)\big(1\!-\!\vt(x_2\!+\!x_4)\big)x^{2r^2-1}(x_1x_2x_3x_4)^{r-1}\notag\\
\times P(1\!-\!x\!-\!x_1\!-\!x_2)P(1\!-\!x\!-\!x_3\!-\!x_4)dxdx_1dx_2dx_3dx_4dt_1dt_2.
\end{multline}
\end{prop}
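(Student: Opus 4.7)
My plan is to deduce Proposition~\ref{thm:smoothsquare} as a direct specialization of Proposition~\ref{prop:key} with $\ul{\alpha}=\ul{\beta}=\ul{0}$. The first step is to identify $\int_{-\infty}^\infty |f(t)|^2 \Phi(t/T)\,dt$ with $I(\ul{0},\ul{0})$. Since $\nu\in\R$, the phase factor satisfies $|e^{i\nu\ml t}|=1$ and drops out of $|f(t)|^2$. Because $\chi_D$ is a real quadratic character, the Dirichlet coefficients $d_r(h_1)d_r(h_2)\chi_D(h_2)P[h_1h_2]$ of $M(s)$ are real; combined with the elementary conjugation identities $\overline{\zeta(\tfrac{1}{2}+it)}=\zeta(\tfrac{1}{2}-it)$ and $\overline{L(\tfrac{1}{2}+it,\chi_D)}=L(\tfrac{1}{2}-it,\chi_D)$, this yields
\[
|f(t)|^2=\zeta(\tfrac{1}{2}+it)L(\tfrac{1}{2}+it,\chi_D)\zeta(\tfrac{1}{2}-it)L(\tfrac{1}{2}-it,\chi_D)M(\tfrac{1}{2}+it)M(\tfrac{1}{2}-it),
\]
which is exactly the integrand of $I(\ul{\alpha},\ul{\beta})$ when every shift vanishes.

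The second step is to apply Proposition~\ref{prop:key} and verify that $c(\ul{0},\ul{0})$ coincides with the stated $c_0$. Setting $\ul{\alpha}=\ul{\beta}=\ul{0}$ in the integrand of $c(\ul{\alpha},\ul{\beta})$, every factor of the form $y^{-(\cdots)}$ becomes $1$, and the two bracketed factors $[Ty^{-(x_1+x_3)}]^{-(\alpha_1+\beta_1)t_1}$ and $[qTy^{-(x_2+x_4)}]^{-(\alpha_2+\beta_2)t_2}$ both collapse to $1$. The remaining integrand has no dependence on $t_1$ or $t_2$, so the integrations in these variables (over $[0,1]$) contribute unit factors, and the surviving expression matches $c_0$ term by term, including the factors $(1-\vt(x_1+x_3))(1-\vt(x_2+x_4))$, $x^{2r^2-1}$, $(x_1x_2x_3x_4)^{r-1}$, and $P(1-x-x_1-x_2)P(1-x-x_3-x_4)$, as well as the constraint region $\{x+x_1+x_2\le 1,\ x+x_3+x_4\le 1\}$.

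There is essentially no technical obstacle to this argument beyond invoking Proposition~\ref{prop:key}: the entire analytic content of the smoothed second moment is encoded in the general shifted formula, and Proposition~\ref{thm:smoothsquare} is its simplest specialization. The only subtle point worth noting (but routine) is that the dependence on the modulus $q$ in $c(\ul{\alpha},\ul{\beta})$ enters solely through the bracketed factor $[qT y^{-(x_2+x_4)}]^{-(\alpha_2+\beta_2)t_2}$, so when $\alpha_2+\beta_2=0$ this factor equals $1$ regardless of the size of $q$, and no extra restriction on $D$ relative to $T$ is imposed at this step beyond the hypothesis $\vt<1/4$ already required by Proposition~\ref{prop:key}.
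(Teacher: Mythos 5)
Your argument is correct and is exactly the paper's proof: identify $\int|f|^2\Phi(t/T)\,dt$ with $I(\ul{0},\ul{0})$ (using that $\chi_D$, $d_r$, and $P$ are real so that conjugation reflects the argument across the real axis) and then specialize Proposition~\ref{prop:key}. The paper states this in one line; you simply spell out the routine verification that the shift-dependent factors in $c(\ul{\alpha},\ul{\beta})$ all reduce to $1$.
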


\begin{proof}
The result follows immediately from Proposition \ref{prop:key} upon noting that
\[
\int_{-\infty}^{\infty} |f(t)|^2\Phi\Big(\frac tT\Big)dt=I(\underline{0},\underline{0}).
\]
\end{proof}

\begin{prop}\label{thm:smoothprimesquare}
Suppose $\vartheta < 1/4$. Then we have
\[
\int_{-\infty}^{\infty}|f^{\prime}(t)|^2\Phi\Big(\frac tT\Big)dt = \frac{c_1(\nu)A_{r+1}(\log y)^{2r^2+4r}\ml^4}{(2r^2-1)!((r-1)!)^4}\widehat{\Phi}(0)T + O(T\ml^{2r^2+4r+3}),
\]
where
\begin{align*}
c_1(\nu) :=&\int\limits_{\substack{[0,1]^7\\x+x_1+x_2\leq 1\\x+x_3+x_4\leq 1}}\big(1\!-\!\vt(x_1\!+\!x_3)\big)\big(1\!-\!\vt(x_2\!+\!x_4)\big)x^{2r^2-1}(x_1x_2x_3x_4)^{r-1}\notag\\
&\qquad\times\Big(\nu \!-\! \vartheta(x+x_1\!+\!x_2\!+\!x_3\!+\!x_4)-t_1\big(1\!-\!\vartheta(x_1\!+\!x_3)\big)-t_2\big(1\!-\!\vartheta(x_2\!+\!x_4)\big)\Big)^2\\
&\qquad\qquad\quad\times P(1\!-\!x\!-\!x_1\!-\!x_2)P(1\!-\!x\!-\!x_3\!-\!x_4)dxdx_1dx_2dx_3dx_4dt_1dt_2.
\end{align*}
\end{prop}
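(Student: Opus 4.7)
The plan is to realise $|f'(t)|^2$ as a second-order differential operator in the shift parameters applied to the integrand of $I(\ul{\alpha},\ul{\beta})$, and then to invoke Proposition~\ref{prop:key}. Setting $G(\ul{\alpha},t) := \zeta(\tfrac12+\alpha_1+it)L(\tfrac12+\alpha_2+it,\chi_D)M(\tfrac12+\alpha_3+it)$ so that $f(t) = e^{i\nu\ml t}G(\ul{0},t)$, the key identity $\partial_t G = i(\partial_{\alpha_1}+\partial_{\alpha_2}+\partial_{\alpha_3})G$ together with the chain rule gives
\[
f'(t) = i e^{i\nu\ml t}(\nu\ml+D_\alpha)G(\ul{\alpha},t)\big|_{\ul{\alpha}=0},\qquad D_\alpha := \partial_{\alpha_1}+\partial_{\alpha_2}+\partial_{\alpha_3},
\]
so that $|f'(t)|^2 = (\nu\ml+D_\alpha)(\nu\ml+D_\beta)\big[G(\ul{\alpha},t)\overline{G(\ul{\beta},t)}\big]\big|_{\ul{\alpha}=\ul{\beta}=0}$. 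Multiplying by $\Phi(t/T)$ and integrating yields
\[
\int_{-\infty}^{\infty}|f'(t)|^2\Phi(t/T)\,dt = (\nu\ml+D_\alpha)(\nu\ml+D_\beta)\,I(\ul{\alpha},\ul{\beta})\big|_{\ul{\alpha}=\ul{\beta}=0}.
\]

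Next I would apply Proposition~\ref{prop:key} and commute the differential operator with the integral defining $c(\ul{\alpha},\ul{\beta})$. Writing the shift-dependent factors as $e^{A(\ul{\alpha},\ul{\beta})}$ with $A$ linear in $(\ul{\alpha},\ul{\beta})$, every mixed second derivative of $A$ vanishes, so $(\nu\ml+D_\alpha)(\nu\ml+D_\beta)e^A = e^A(\nu\ml+D_\alpha A)(\nu\ml+D_\beta A)$. A direct calculation shows that at $\ul{\alpha}=\ul{\beta}=0$ the partials $D_\alpha A$ and $D_\beta A$ coincide, their common value being $-\log y\,(x+x_1+x_2+x_3+x_4) - t_1\big(\log T - \log y(x_1+x_3)\big) - t_2\big(\log(qT) - \log y(x_2+x_4)\big)$. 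Under the standing hypothesis $D\ll T^\varepsilon$ the ratios $\log y/\ml$, $\log T/\ml$ and $\log(qT)/\ml$ equal $\vt$, $1$ and $1$ up to $o(1)$, so after dividing by $\ml$ I recover exactly the linear expression $\nu - \vt(x+x_1+x_2+x_3+x_4) - t_1(1-\vt(x_1+x_3)) - t_2(1-\vt(x_2+x_4))$ that is squared in $c_1(\nu)$; the extra factor of $\ml^2$ then combines with the $\ml^2$ already present in Proposition~\ref{prop:key} to yield the claimed $\ml^4$.

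For the error, the remainder $O(T\ml^{2r^2+4r+1})$ in Proposition~\ref{prop:key} is uniform for $|\alpha_i|, |\beta_i| \ll 1/\ml$, so Cauchy's integral formula bounds each single shift-derivative by $\ml$ times its size; applying $(\nu\ml+D_\alpha)(\nu\ml+D_\beta)$ inflates this at worst to $O(T\ml^{2r^2+4r+3})$, matching the statement. The main obstacle is the bookkeeping in the differentiation step: checking that $D_\alpha A|_0$ equals $D_\beta A|_0$, so that the result comes out as a clean square rather than a cross-term, and verifying that the lower-order discrepancies between $\log T$, $\log(qT)$ and $\ml$ can be absorbed into the stated error term under $D\ll T^\varepsilon$.
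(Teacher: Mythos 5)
Your proposal is correct and follows essentially the same route as the paper: both express $\int|f'|^2\Phi$ as $\ml^2\,Q\bigl[\ml^{-1}(\partial_{\alpha_1}+\partial_{\alpha_2}+\partial_{\alpha_3})\bigr]Q\bigl[\ml^{-1}(\partial_{\beta_1}+\partial_{\beta_2}+\partial_{\beta_3})\bigr]I(\ul{\alpha},\ul{\beta})\big|_{\ul{\alpha}=\ul{\beta}=\ul{0}}$ with $Q(x)=\nu+x$ (identical to your $(\nu\ml+D_\alpha)(\nu\ml+D_\beta)$), then differentiate the main term of Proposition~\ref{prop:key} directly and control the error term via Cauchy's integral formula over a circle of radius $\ml^{-1}$. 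You simply spell out more explicitly the observation that the exponent $A$ is linear in the shifts with $D_\alpha A|_0=D_\beta A|_0$, which the paper leaves implicit in "gives the desired formula for $c_1(\nu)$."
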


\begin{proof}
We have
\begin{align*}
\int_{-\infty}^{\infty} |f^\prime(t)|^2\Phi\Big(\frac tT\Big)dt=&\ml^2Q\bigg[\frac{1}{\ml}\bigg(\frac{d}{d\alpha_1}+\frac{d}{d\alpha_2}+\frac{d}{d\alpha_3}\bigg)\bigg]\\
&\qquad\qquad\times Q\bigg[\frac{1}{\ml}\bigg(\frac{d}{d\beta_1}+\frac{d}{d\beta_2}+\frac{d}{d\beta_3}\bigg)\bigg]I(\underline{\alpha}, \underline{\beta})\bigg|_{\underline{\alpha}=\underline{\beta}=\underline{0}},
\end{align*}
where
\[
Q(x)=\nu+x.
\] 
Note that
\[
Q\bigg[\frac{1}{\ml}\bigg(\frac{d}{d\alpha_1}+\frac{d}{d\alpha_2}+\frac{d}{d\alpha_3}\bigg)\bigg]X_1^{\alpha_1}X_2^{\alpha_2}X_3^{\alpha_3}=Q\bigg[\frac{\log X_1+\log X_2+\log X_3}{\ml}\bigg]X_1^{\alpha_1}X_2^{\alpha_2}X_3^{\alpha_3}.
\]
Since the error term in $I(\ul{\alpha},\ul{\beta})$ is necessarily holomorphic for small $\alpha_i,\beta_j$, we may apply the above differential operator via Cauchy's integral formula over a circle of radius $\ml^{-1}$ to give the desired formula for  $c_1(\nu)$ along with the desired error term. 
\end{proof}

\subsection{Completion of the proof of Theorem \ref{thm:main}}
From Propositions \ref{thm:smoothsquare} and \ref{thm:smoothprimesquare}, we deduce that
\begin{equation}\label{eq:meansquaref}
\int_{T}^{2T}|f(t)|^2dt = \frac{c_0A_{r+1}(\log y)^{2r^2+4r}\ml^2}{(2r^2-1)!((r-1)!)^4}T + O(T\ml^{2r^2+4r+1})
\end{equation}
and
\begin{equation}\label{eq:meansquarefprime}
\int_{T}^{2T}|f^{\prime}(t)|^2dt = \frac{c_1(\nu)A_{r+1}(\log y)^{2r^2+4r}\ml^4}{(2r^2-1)!((r-1)!)^4}T+ O(T\ml^{2r^2+4r+3}).
\end{equation}
By \eqref{eq:meansquaref} and \eqref{eq:meansquarefprime}, we have
\begin{align*}
h(D,\kappa,M) &:=\frac{\ml^2}{\kappa^2}\frac{\int_T^{2T}|f(t)|^2dt}{\int_T^{2T}|f^\prime(t)|^2dt}\\
& = \frac{c_0}{\kappa^2c_1(\nu)} + o(1).
\end{align*}
The choice $\vartheta=1/4$, $v=1.2773$, $r=1$, and
\[
P(x)=1-10.8998x+28.9444x^2-22.1343x^3+0.6148x^4
\]
gives
\[
h(D,2.866,M)=1.00016\ldots,
\]
and the theorem follows.

\begin{rem}It is not clear that emulating higher moments should necessarily lead to larger gaps in our framework. The basic point is that the coefficients in the denominator of the ratio $\int |f|^2/\int |f^\prime|^2$ can often be larger than those of the numerator when one considers higher moments. 

For example, taking $f(t)=\zeta(1/2+it)^k$ in our setup leads to lower bounds on $\Lambda_\mathbb{Q}^2$ of (essentially) the form
\begin{equation}\label{ratio}
\big(\tfrac{1}{2}\log T\big)^2\frac{\int_T^{2T}|\zeta(\tfrac{1}{2}+it)|^{2k}dt}{\int_T^{2T}|k\zeta^\prime(\tfrac{1}{2}+it)\zeta(\tfrac{1}{2}+it)^{k-1}|^2dt}.
\end{equation}
If we assume the moments conjecture of Keating-Snaith \cite{KS},
\[\int_T^{2T}|\zeta(\tfrac{1}{2}+it)|^{2k}dt\sim \frac{a(k)g(k)}{k^2!}T(\log T)^{k^2},\]
along with the following conjecture of Hughes \cite[Equation (6.81)]{Hughes thesis},
\[\int_T^{2T}|\zeta^\prime(\tfrac{1}{2}+it)\zeta(\tfrac{1}{2}+it)^{k-1}|^2dt\sim \frac{a(k)g(k)}{k^2!}\frac{k^2}{4k^2-1}T(\log T)^{k^2+2},\]
then the ratio in \eqref{ratio} is given by $(4k^2-1)/4k^4$ which tends to zero as $k\to\infty$.  By considering Hardy's $Z$-function instead of the zeta-function one can make a slight improvement here. Even then, assuming a conjecture of Hughes \cite[Conjecture 6.1]{Hughes thesis} the best one can get from this method is $\Lambda_\mathbb{Q}\geqslant 2$.

As we have demonstrated, one can perform further optimizations in this setup. However, when considering higher moments (or their emulation via twisted moments) there may be more germane inequalities other than the basic inequality of Wirtinger. This is the subject of Hall's paper \cite{Hall2}, where unfortunately the resulting lower bounds on $\Lambda_\mathbb{Q}$ are seemingly very difficult to calculate for $k\geq 4$.  

If one is to assume the higher moment conjectures then a better approach is to use the method of Mueller \cite{Mu}. Briefly, the reason for this is that there is an issue of overcounting in Mueller's method, but this can be reduced by taking higher moments and this allows one to detect larger gaps. For example, in the paper \cite{SS} it is proved that there are infinitely large gaps subject to the appropriate moment conjectures.
\end{rem}

%%%%%%%%%%%%%% LEMMAS %%%%%%%%%%%%%%
\section{Proof of Proposition \ref{prop:key}}\label{propproof}
\subsection{Lemmas}
We first collect some lemmas of Bui and Milinovich \cite[Lemma 4.1 and Lemma 4.2]{HBgaps}.

\begin{lem}\label{K lem}Let
\[
K_j(\alpha, \beta)= \frac{1}{2\pi i}\int_{(\ml^{-1})}\Big( \frac{y}{n}\Big)^u\zeta^{r}(1\!+\!\alpha\!+\!u)\zeta^{r}(1\!+\!\beta\!+\!u)\frac{du}{u^{j+1}}.
\]
Then we have 
\begin{align*}
K_{j}(\alpha, \beta)=\frac{(\log y/n)^{j+2r}}{((r-1)!)^2j!}\iint\limits_{\substack{x_1+x_2\leq 1\\0\leq x_1,x_2\leq 1}}\Big(\frac{y}{n}\Big)^{-\alpha x_1-\beta x_2}(x_1x_2)^{r-1}(1-x_1-x_2)^jdx_1dx_2+O(\ml^{j+2r-1})
\end{align*}
uniformly for $\alpha, \beta \ll \ml^{-1}$.
\end{lem}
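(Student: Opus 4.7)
Set $L:=\log(y/n)$, so that $0\le L\le \log y\ll \ml$. Since $g(s):=s\zeta(1+s)$ is holomorphic near $s=0$ with $g(0)=1$, we may factor out the polar part
\[
\zeta^r(1+\alpha+u)\zeta^r(1+\beta+u) = \frac{G(\alpha,\beta,u)}{(u+\alpha)^r (u+\beta)^r}, \qquad G(\alpha,\beta,u):= g(\alpha+u)^r g(\beta+u)^r,
\]
where $G$ is holomorphic in a fixed neighbourhood of $u=0$ uniformly for $\alpha,\beta\ll\ml^{-1}$, and $G(\alpha,\beta,0)=1+O(\ml^{-1})$. Split $G=1+(G-1)$, giving $K_j=K_j^{\mathrm{main}}+K_j^{\mathrm{err}}$, and treat the two parts separately.

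\textbf{Main term.} For $K_j^{\mathrm{main}}$ I would use the Gamma-type identity
\[
\frac{1}{(u+\alpha)^r(u+\beta)^r} = \frac{1}{((r-1)!)^2}\int_0^\infty\!\!\int_0^\infty e^{-(u+\alpha)s_1-(u+\beta)s_2}\, s_1^{r-1}s_2^{r-1}\, ds_1\, ds_2,
\]
which is valid on the contour $\Re u=\ml^{-1}$ because $\Re(u+\alpha)>0$ there. Swapping the order of integration (justified by the $1/u^{j+1}$ decay on vertical lines together with the standard bound $\zeta(1+\ml^{-1}+\alpha+it)\ll 1$) and applying the Perron/Mellin identity
\[
\frac{1}{2\pi i}\int_{(\ml^{-1})}\frac{e^{u(L-s_1-s_2)}}{u^{j+1}}\,du = \frac{(L-s_1-s_2)^j}{j!}\,\mathbf{1}_{s_1+s_2<L},
\]
and finally rescaling $s_i=Lx_i$ (so that $e^{-\alpha L x_i}=(y/n)^{-\alpha x_i}$), one arrives at
\[
K_j^{\mathrm{main}} = \frac{L^{j+2r}}{((r-1)!)^2\, j!}\iint_{\substack{x_1+x_2\le 1\\ x_1,x_2\ge 0}} (y/n)^{-\alpha x_1-\beta x_2}\,(x_1x_2)^{r-1}(1-x_1-x_2)^j\, dx_1\, dx_2,
\]
which is exactly the asserted main term.

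\textbf{Error term.} For $K_j^{\mathrm{err}}$ I would exploit that $G(\alpha,\beta,u)-1$ vanishes at the origin in the combined parameters. Writing $G(\alpha,\beta,u)-1 = (G(\alpha,\beta,0)-1) + u\, H(\alpha,\beta,u)$ with $H$ holomorphic near $u=0$ and $G(\alpha,\beta,0)-1 = O(\ml^{-1})$, the first piece produces a contribution bounded by $O(\ml^{-1})\cdot L^{j+2r} = O(\ml^{j+2r-1})$ directly from the main-term computation. The second piece replaces $u^{-(j+1)}$ by $u^{-j}$, and repeating the same Gamma + Perron scheme (or, equivalently, shifting the contour to $\Re u=-c/\ml$ and estimating the shifted integral via the standard growth of $\zeta$ just to the right of $\Re s=1$) yields a bound $O(L^{j+2r-1})=O(\ml^{j+2r-1})$.

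\textbf{Main obstacle.} The delicate point is the uniformity of the $O(\ml^{j+2r-1})$ bound on $K_j^{\mathrm{err}}$ across the allowed range $\alpha,\beta\ll \ml^{-1}$ and across all $j\ge 0$; one must also carefully justify the interchange of integrations (in particular when $j=0$), which ultimately reduces to routine estimates for $\zeta(s)$ on vertical lines slightly to the right of $\Re s=1$.
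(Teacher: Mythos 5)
The paper does not prove this lemma itself — it simply cites Lemma 4.1 of Bui--Milinovich \cite{HBgaps} — so there is no internal proof to compare against; I evaluate your argument on its own merits.

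Your main-term computation is correct and clean: factoring $\zeta(1+\alpha+u)\zeta(1+\beta+u)=G(\alpha,\beta,u)/\big((u+\alpha)(u+\beta)\big)$ via $g(s)=s\zeta(1+s)$, invoking the Gamma identity $\frac{1}{z^r}=\frac{1}{(r-1)!}\int_0^\infty e^{-zs}s^{r-1}\,ds$, interchanging the integrals, and applying the discontinuous (Perron) integral $\frac{1}{2\pi i}\int_{(c)}\frac{e^{ux}}{u^{j+1}}\,du=\frac{x^j}{j!}\mathbf{1}_{x>0}$, followed by the rescaling $s_i=Lx_i$ with $L=\log(y/n)$, reproduces the stated double integral with exactly the right prefactor $L^{j+2r}/\big(((r-1)!)^2 j!\big)$ and with $e^{-\alpha Lx_1}=(y/n)^{-\alpha x_1}$. (A minor remark: since $\alpha,\beta\ll\ml^{-1}$ may have implied constant $>1$, you should first shift the contour to $\Re u=C\ml^{-1}$ with $C$ large so that $\Re(u+\alpha),\Re(u+\beta)>0$ on the contour, which is needed for the Gamma identity and for absolute convergence of the triple integral when $j\ge 1$. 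This is cost-free since the integrand is holomorphic there.)

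The gap is in your treatment of the ``$uH(\alpha,\beta,u)$'' piece of $K_j^{\mathrm{err}}$. You write that ``repeating the same Gamma $+$ Perron scheme'' gives $O(L^{j+2r-1})$, but that scheme required the numerator to be \emph{independent} of $u$, and $H(\alpha,\beta,u)$ is a genuine function of $u$ (holomorphic near $0$ but growing roughly like $|u|^{2r-1+\varepsilon}$ along the vertical line, since it still carries the $\zeta^{2r}$-factors away from the origin). So there is no Perron identity available term-by-term, and a naive Taylor expansion $H=\sum_k h_k u^k$ produces divergent integrals once $k\gtrsim j+2r$. Your alternative suggestion — shift the contour to $\Re u=-c/\ml$ — is the right repair, but needs fleshing out along the following lines: truncate to $|\Im u|\le 1$ (the tail is $O(1)$ by crude bounds $\zeta(1+\sigma+it)\ll\log|t|$ and the $|u|^{-(j+1)}$ decay); shift to $\Re u=-C\ml^{-1}$ with $C$ large enough to keep clear of $-\alpha,-\beta$; on the shifted contour with $|t|=|\Im u|\le 1$ one has $|u|\asymp\max(\ml^{-1},|t|)$, $|(y/n)^u|\le 1$, $|G-1|\ll|\alpha|+|\beta|+|u|\ll\max(\ml^{-1},|t|)$, and $|(u+\alpha)^r(u+\beta)^r u^{j+1}|\gg\max(\ml^{-1},|t|)^{j+2r+1}$, so the integrand is $\ll\max(\ml^{-1},|t|)^{-(j+2r)}$ and the integral over $|t|\le 1$ is $\ll\ml^{j+2r-1}$; finally, account for any residues crossed in the shift (the pole at $u=0$ now has order $\le j$ because of the vanishing factor, and poles at $-\alpha,-\beta$, if crossed, contribute $O(\ml^{j+2r-1})$ by the same kind of size estimate). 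You flag exactly this uniformity issue as the ``main obstacle'', so you have correctly located where the work lies, but the argument as written does not close it.
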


\begin{lem}\label{d sum lem}Suppose $f$ is a smooth function. Then we have
\begin{equation*}
\sum_{n\leq y}\frac{d_r(n)}{n^{1+\alpha}}f\Big(\frac{\log y/n}{\log y}\Big)=\frac{\log^r y}{(r-1)!}\int_0^1y^{-\alpha x}x^{r-1}f(1-x)dx+O(\ml^{r-1}).
\end{equation*}
\end{lem}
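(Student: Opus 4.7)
The plan is to evaluate the sum via Mellin inversion combined with a contour shift, extracting the leading residue of $\zeta(1+\alpha+s)^r$ at the pole $s = -\alpha$.

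First, I will represent the smooth cutoff by its Mellin transform. Setting $w(x) := f\bigl(\log(y/x)/\log y\bigr)\mathbf{1}_{1\le x\le y}$ and substituting $u = \log(y/x)/\log y$ in $\int_0^\infty w(x) x^{s-1}\,dx$, I obtain
\[
\int_0^\infty w(x) x^{s-1}\,dx = \log y \cdot y^s \int_0^1 f(u) y^{-us}\,du,
\]
which is entire in $s$. Combining with the Dirichlet series $\sum_n d_r(n)/n^{1+\alpha+s} = \zeta(1+\alpha+s)^r$ (absolutely convergent for $\Re(s) > -\Re(\alpha)$) and interchanging sum and integral yields, for any $c > 0$,
\[
\sum_{n\leq y}\frac{d_r(n)}{n^{1+\alpha}} f\Big(\frac{\log y/n}{\log y}\Big) = \frac{\log y}{2\pi i}\int_{(c)} \zeta(1+\alpha+s)^r \int_0^1 f(u) y^{(1-u)s}\,du\,ds.
\]

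Next I will shift the contour leftwards across $s = -\alpha$, the only pole encountered (of order $r$). Writing $G(s) := (s+\alpha)^r \zeta(1+\alpha+s)^r$, which is holomorphic near $s=-\alpha$ with $G(-\alpha) = 1$, Leibniz's rule expresses the residue as
\[
\mathrm{Res}_{s=-\alpha} = \frac{\log y}{(r-1)!}\sum_{k=0}^{r-1}\binom{r-1}{k} G^{(r-1-k)}(-\alpha)\int_0^1 f(u) \bigl((1-u)\log y\bigr)^k y^{-\alpha(1-u)}\,du.
\]
The leading $k = r-1$ term yields $\tfrac{(\log y)^r}{(r-1)!}\int_0^1 f(u)(1-u)^{r-1} y^{-\alpha(1-u)}\,du$, which after substituting $x = 1-u$ matches the asserted main term. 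The remaining pieces $k \le r-2$ carry at most $(\log y)^{r-1}$ and, using $G^{(j)}(-\alpha) = O(1)$ uniformly for $\alpha \ll \ml^{-1}$, contribute $O(\ml^{r-1})$.

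Finally, I will bound the shifted contour integral. Taking the new line $\Re(s) = -c'$ with $c' \asymp 1/\log y$, one has $\Re(1+\alpha+s) > 1 - O(1/\log y)$, so $\zeta(1+\alpha+s)^r$ grows at most polylogarithmically in $|\Im(s)|$; repeated integration by parts in $u$, exploiting smoothness of $f$, supplies arbitrary polynomial decay of $\int_0^1 f(u) y^{(1-u)s}\,du$ in $|s|\log y$, so the shifted integral is negligible. The principal technical point I anticipate is the bookkeeping of errors — checking that the subleading residue contributions and the shifted contour bound really are absorbed into $O(\ml^{r-1})$ uniformly for $\alpha \ll \ml^{-1}$. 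Since $G$ is analytic in a fixed neighbourhood of the origin with uniformly bounded derivatives, and the integration-by-parts estimates are standard, this poses no real obstacle.
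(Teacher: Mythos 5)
The paper does not prove this lemma; it cites it from Bui--Milinovich, so your argument must be judged on its own merits. Your Mellin‑inversion plus contour‑shift strategy is a reasonable one, and your residue computation is correct: the Leibniz expansion of the order‑$r$ pole at $s=-\alpha$ does produce the stated main term (the $k=r-1$ piece), and the $k\le r-2$ pieces are $O((\log y)^{r-1})$ since $G^{(j)}(-\alpha)\ll 1$ uniformly for $\alpha\ll\ml^{-1}$.

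The gap is in your treatment of the shifted contour. You assert that ``repeated integration by parts in $u$, exploiting smoothness of $f$, supplies arbitrary polynomial decay of $\int_0^1 f(u)y^{(1-u)s}\,du$ in $|s|\log y$.'' This is false. Each integration by parts produces boundary terms at $u=0$ and $u=1$:
\[
\int_0^1 f(u)\,y^{(1-u)s}\,du \;=\; \sum_{j=0}^{k-1}\frac{f^{(j)}(0)\,y^{s}-f^{(j)}(1)}{(s\log y)^{j+1}} \;+\; \frac{1}{(s\log y)^{k}}\int_0^1 f^{(k)}(u)\,y^{(1-u)s}\,du,
\]
and $f$ is in general \emph{not} compactly supported in $(0,1)$ — in the paper's application one has $f(z)=y^{-cz}z^{m}$, so $f(1)\neq 0$. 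Consequently the integral decays only like $1/(|s|\log y)$, not arbitrarily fast. With only this one power of $|s|^{-1}$, the integrand $\zeta(1+\alpha+s)^r\cdot\widehat{w}(s)$ is not absolutely integrable on vertical lines, so both the interchange of sum and contour integral and the ``shifted integral is negligible'' claim need more work than you have given. The boundary terms must be peeled off and handled as Perron‑type integrals (or one must work with a truncated Perron formula from the outset), after which the remainder $\phi_k(s)/(s\log y)^k$ with $k\geq 2$ does decay fast enough and the shifted line contributes $O(\ml^{-1})$. As written, though, the error‑term justification has a genuine hole precisely at the step you flagged as ``pos[ing] no real obstacle.''
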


%%%%%%%%%%%%%% MEAN-VALUES CALCULATIONS %%%%%%%%%%%%%%
\subsection{Mean value calculations}

To evaluate $I(\ul{\alpha},\ul{\beta})$, defined in \eqref{Idef}, we apply a special case of a result of~\cite{BBLR}, which computes the twisted moment of the product of four Dirichlet $L$-functions. 

Let $G(s)$ be an even entire function of rapid decay in any fixed strip $|\Re(s)|\leq C$ satisfying $G(0)=1$. Furthermore, assume that $G(s)$ vanishes at $s=-(\alpha_i+\beta_i)/2$ for $i=1, 2$. Let
\begin{equation}\label{Vx}
V(x)=\frac{1}{2\pi i}\int_{(1)}G(s)(2\pi)^{-2s}x^{-s}\frac{ds}{s}.
\end{equation}
Given two positive integers $a$ and $b$, and four primitive characters $\psi_1, \psi_2, \chi_1,$ and $\chi_2,$ we denote  
\[Z_{\alpha,\beta,\gamma,\delta,a,b}(t,\psi_1,\psi_2,\chi_1,\chi_2):=\sum_{am_1m_2=bn_1n_2}\frac{\psi_1(m_1)\psi_2(m_2)\chi_1(n_1)\chi_2(n_2)}{m_1^{1/2+\alpha}m_2^{1/2+\beta}n_1^{1/2+\gamma}n_2^{1/2+\delta}}V\Big(\frac{m_1m_2n_1n_2}{t^2}\Big).\]

Since $\zeta_K(s)=\zeta(s)L(s,\chi_D)$, we shall take $\psi_1\!=\!\chi_1\!=\!1$, the trivial character, and $\psi_2\!=\!\chi_2\!=\!\chi_D$. Applying Theorem 1.3 of \cite{BBLR} in this special case, we obtain the following theorem.

\begin{thm}\label{twists}Suppose $\vt<1/4$. Then we have
\begin{align*}
&\int_{-\infty}^\infty \zeta(\tfrac{1}{2}\!+\!\alpha_1\!+\!it)L(\tfrac{1}{2}\!+\!\alpha_2\!+\!it,\chi_D)\zeta(\tfrac{1}{2}\!+\!\beta_1\!-\!it)
L(\tfrac{1}{2}\!+\!\beta_2\!-\!it,\chi_D)\sum_{h,k\leq y}\frac{c_hc_k}{h^{1/2+it}k^{1/2-it}}\Phi\Big(\frac tT\Big)dt\nonumber\\
&\qquad=\sum_{h,k\leq y}\frac{c_hc_k}{\sqrt{hk}}\int_{-\infty}^\infty\Phi\Big(\frac tT\Big)\bigg[Z_{\alpha_1,\alpha_2,\beta_1,\beta_2,h,k}(t,1,\chi_D,1,\chi_D)\nonumber\\
&\qquad\qquad+\Big(\frac{t}{2\pi}\Big)^{-(\alpha_1+\beta_1)}Z_{-\beta_1,\alpha_2,-\alpha_1,\beta_2,h,k}(t,1,\chi_D,1,\chi_D)\nonumber\\
&\qquad\qquad+\Big(\frac{qt}{2\pi}\Big)^{-(\alpha_2+\beta_2)}Z_{\alpha_1,-\beta_2,\beta_1,-\alpha_2,h,k}(t,1,\chi_D,1,\chi_D)\\
&\qquad\qquad+\Big(\frac{t}{2\pi}\Big)^{-(\alpha_1+\beta_1)}\Big(\frac{qt}{2\pi}\Big)^{-(\alpha_2+\beta_2)}Z_{-\beta_1,-\beta_2,-\alpha_1,-\alpha_2,h,k}(t,1,\chi_D,1,\chi_D)\nonumber\\
&\qquad\qquad+\ve_{\chi_D}\Big(\frac{t}{2\pi}\Big)^{-\alpha_1}\Big(\frac{qt}{2\pi}\Big)^{-\beta_2}Z_{-\beta_2,\alpha_2,\beta_1,-\alpha_1,h,qk}(t,\chi_D,\chi_D,1,1)\nonumber\\
&\qquad\qquad+\ve_{\chi_D}\Big(\frac{t}{2\pi}\Big)^{-\beta_1}\Big(\frac{qt}{2\pi}\Big)^{-\alpha_2}Z_{\alpha_1,-\beta_1,-\alpha_2,\beta_2,qh,k}(t,1,1,\chi_D,\chi_D)\bigg]dt+O_\varepsilon\left(T^{1-\varepsilon}\right)\nonumber
\end{align*}
uniformly for $\alpha_i,\beta_i\ll\ml^{-1}$, where $\varepsilon_{\chi_D}$ is the root factor in the functional equation for $L(s,\chi_D)$.
\end{thm}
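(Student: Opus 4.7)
The plan is to apply Theorem 1.3 of \cite{BBLR} directly, specializing the four primitive characters to $\psi_1 = \chi_1 = 1$ (trivial) and $\psi_2 = \chi_2 = \chi_D$, and to identify the six terms in the output with the six main-term contributions of that theorem. The general theorem of \cite{BBLR} evaluates a twisted shifted fourth moment of four Dirichlet $L$-functions and expresses it as a sum of main terms parametrized by the ways of applying the functional equations of the individual $L$-factors, together with additional off-diagonal contributions that arise when cross-pairings of the characters produce the principal character.

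First, I would invoke Theorem 1.3 of \cite{BBLR} with the above choice of characters. The four diagonal terms arise from the trivial pairings $\psi_1\overline{\chi_1} = 1$ and $\psi_2\overline{\chi_2} = 1$: the identity (the first $Z$-sum), applying the functional equation to the $\zeta$-factor (the second term, swapping $\alpha_1 \leftrightarrow -\beta_1$ and introducing $(t/2\pi)^{-(\alpha_1+\beta_1)}$ from the gamma-ratio), applying it to the $L(\cdot,\chi_D)$-factor (the third term, swapping $\alpha_2 \leftrightarrow -\beta_2$ and introducing $(qt/2\pi)^{-(\alpha_2+\beta_2)}$ with $q$ the conductor of $\chi_D$), and applying both functional equations simultaneously (the fourth term). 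The cross-pairings $\psi_1\overline{\chi_2} = \chi_D$ and $\psi_2\overline{\chi_1} = \chi_D$ produce the two remaining terms, each carrying the root number $\varepsilon_{\chi_D}$ of $L(s,\chi_D)$; in each, one of the twist moduli is augmented from $h$ (or $k$) to $qh$ (or $qk$) in order to absorb the switch of character between the $\psi$- and $\chi$-sides.

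The main obstacle is purely bookkeeping: one must correctly identify the six main terms coming out of the specialization, verify that the real character $\chi_D$ does not cause any of them to coincide or to acquire unexpected multiplicity, and confirm that the error term can be taken as $O_\varepsilon(T^{1-\varepsilon})$ uniformly for $\alpha_i, \beta_i \ll \ml^{-1}$ and $h, k \leq y$. The condition $\vartheta < 1/4$ is precisely the range in which the main theorem of \cite{BBLR} provides a power-saving error, because the product $hk$ of twist moduli may be as large as $y^2 = T^{2\vartheta}$, and the availability of a nontrivial error term requires $2\vartheta < 1/2$. No further analytic work beyond a clean specialization of the general theorem is needed.
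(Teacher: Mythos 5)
Your proposal is correct and is essentially the same approach as the paper: the paper simply invokes Theorem 1.3 of \cite{BBLR} with $\psi_1=\chi_1=1$ and $\psi_2=\chi_2=\chi_D$ and records the resulting six terms, exactly as you describe.
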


Theorem \ref{twists} gives
\[I(\ul{\alpha},\ul{\beta})=I_1+I_2+I_3+I_4+I^\prime_1+I^\prime_2+O_\varepsilon\left(T^{1-\varepsilon}\right),\]
say, where the terms $I_1, I_2, I_3, I_4$ correspond to the first four terms of the (expanded) above expression and $I^\prime_1, I^\prime_2$  correspond to the last two. We shall show in Section \ref{lower order sec} that these latter terms are of a lower order.

\subsection{Main terms}\label{main term sec}Consider the first term
\begin{align*}
I_1=&\sum_{h_1h_2,k_1k_2\leq y}\frac{d_r(h_1)d_r(h_2)\chi_D(h_2)P[h_1h_2]d_r(k_1)d_r(k_2)\chi_D(k_2)P[k_1k_2]}{(h_1h_2)^{1/2+\alpha_3}(k_1k_2)^{1/2+\beta_3}}\\
&\qquad\quad\times\sum_{h_1h_2m_1m_2=k_1k_2n_1n_2}\frac{\chi_D(m_2)\chi_D(n_2)}{m_1^{1/2+\alpha_1}m_2^{1/2+\alpha_2}n_1^{1/2+\beta_1}n_2^{1/2+\beta_2}}\int_{-\infty}^{\infty}\Phi\Big(\frac{t}{T}\Big)V\Big(\frac{m_1m_2n_1n_2}{t^2}\Big)dt.
\end{align*}
The terms $I_2, I_3, I_4$ can be acquired by permuting the shifts and by multiplying by the appropriate factors of $T$ and $q$. We therefore concentrate on $I_1$ in the meanwhile. \\

The contour integral representation \eqref{P integral} and \eqref{Vx} give
\begin{multline*}I_1=\sum_{i,j}\frac{b_ib_ji!j!}{(\log y)^{i+j}}\Big(\frac{1}{2\pi i}\Big)^3\int_{-\infty}^{\infty}\int_{(1)}\int_{(1)}\int_{(1)}\Phi\Big(\frac{t}{T}\Big)G(s)\Big(\frac{t}{2\pi}\Big)^{2s} y^{u+v}\\
\times\sum_{h_1h_2m_1m_2=k_1k_2n_1n_2}\frac{d_r(h_1)d_r(h_2)\chi_D(h_2)d_r(k_1)d_r(k_2)\chi_D(k_2)}{(h_1h_2)^{1/2+\alpha_3+u}(k_1k_2)^{1/2+\beta_3+v}}\\
\times\frac{\chi_D(m_2)\chi_D(n_2)}{m_1^{1/2+\alpha_1+s}m_2^{1/2+\alpha_2+s}n_1^{1/2+\beta_1+s}n_2^{1/2+\beta_2+s}}\frac{du}{u^{i+1}}\frac{dv}{v^{j+1}}\frac{ds}{s}dt.
\end{multline*}
Since the coefficients and the condition $h_1h_2m_1m_2=k_1k_2n_1n_2$ are both multiplicative, we may express the inner sum in terms of an Euler product. From this we see that the inner sum is given by 
\begin{multline}\label{euler prod}A(\ul{\alpha},\ul{\beta},u,v)\zeta(1\!+\!\alpha_3\!+\!\beta_3\!+\!u\!+\!v)^{2r^2}\zeta(1\!+\!\alpha_3\!+\!\beta_1\!+\!u+\!s)^r
\zeta(1\!+\!\alpha_3\!+\!\beta_2\!+\!u+\!s)^r\\\times\zeta(1\!+\!\beta_3\!+\!\alpha_1\!+\!v+\!s)^r\zeta(1\!+\!\beta_3\!+\!\alpha_2\!+\!v+\!s)^r
\zeta(1\!+\!\alpha_1\!+\!\beta_1+\!2s)\zeta(1\!+\!\alpha_2\!+\!\beta_2+\!2s),
\end{multline}
where $A(\ul{\alpha},\ul{\beta},u,v)$ is some arithmetic factor which is absolutely convergent in the product of the half-planes $\Re(u)>-1/2,\, \Re(v)>-1/2$.

We first move the $u$ and $v$ contours to $\Re(u)=\Re(v)=\delta$, and then move the $s$ contour to $\Re(s)=-\delta/2$, where
$\delta> 0$ is some fixed constant such that the arithmetic factor converges absolutely. In doing so, we only cross a pole at $s=0$. Note that the poles at $s=-(\alpha_i+\beta_i)/2$ for $i=1,2$ of the zeta-functions are cancelled out by the zeros of $G(s)$. On the new line of integration we simply bound the integral by absolute values, giving the following contribution
\[
\ll_\varepsilon T^{1+\varepsilon}y^{2\delta}T^{-\delta}\ll_\varepsilon T^{1-\varepsilon}.
\] Therefore, 
\begin{equation}\label{I_1}I_1=\widehat{\Phi}(0)T\zeta(1\!+\!\alpha_1\!+\!\beta_1)\zeta(1\!+\!\alpha_2\!+\!\beta_2)\sum_{i,j}\frac{b_ib_ji!j!}{(\log y)^{i+j}}J_{i,j}+O_\varepsilon(T^{1-\varepsilon}),\end{equation}
where
\begin{equation*}
\begin{split}
J_{i,j}=&\Big(\frac{1}{2\pi i}\Big)^2\int_{(1)}\int_{(1)} y^{u+v}A(\ul{\alpha},\ul{\beta},u,v)\zeta(1\!+\!\alpha_3\!+\!\beta_3\!+\!u\!+\!v)^{2r^2}\zeta(1\!+\!\alpha_3\!+\!\beta_1\!+\!u)^r\\
&\qquad\times\zeta(1\!+\!\alpha_3\!+\!\beta_2\!+\!u)^r\zeta(1\!+\!\beta_3\!+\!\alpha_1\!+\!v)^r\zeta(1\!+\!\beta_3\!+\!\alpha_2\!+\!v)^r\frac{du}{u^{i+1}}\frac{dv}{v^{j+1}}\\
=&\sum_{n\leq y}\frac{d_{2r^2}(n)}{n^{1+\alpha_3+\beta_3}}\Big(\frac{1}{2\pi i}\Big)^2\int_{(1)}\int_{(1)} \Big(\frac{y}{n}\Big)^{u+v}A(\ul{\alpha},\ul{\beta},u,v)\zeta(1\!+\!\alpha_3\!+\!\beta_1\!+\!u)^r\\
&\qquad\times\zeta(1\!+\!\alpha_3\!+\!\beta_2\!+\!u)^r\zeta(1\!+\!\beta_3\!+\!\alpha_1\!+\!v)^r\zeta(1\!+\!\beta_3\!+\!\alpha_2\!+\!v)^r\frac{du}{u^{i+1}}\frac{dv}{v^{j+1}}.
\end{split}
\end{equation*}
Here we have restricted the sum to  $n\leq y$ since the error term can be made arbitrarily small by moving the contours of integration sufficiently far enough to the right. 

We now shift the contours of integration to $\Re(u)=\Re(v)= \ml^{-1}$. A trivial estimate then gives $J_{i,j}\ll \ml^{i+j+2r^2+4r}$. Since $A(\ul{\alpha},\ul{\beta},u,v)$ is holomorphic at $(\ul{0},\ul{0},0,0)$ we have the Taylor expansion
\[
A(\ul{\alpha},\ul{\beta},u,v)=A(\ul{0},\ul{0},0,0)+O(\ml^{-1})+O(|u|+|v|).
\]
So, upon replacing $A(\ul{\alpha},\ul{\beta},u,v)$ by $A(\ul{0},\ul{0},0,0)$, we incur an error of size $O(\ml^{i+j+2r^2+4r-1})$. 

We take a brief diversion to show that $A(\ul{0},\ul{0},0,0)=A_{r+1}$. We have 
\begin{equation*}
\begin{split}
A(\ul{s},\ul{s},0,0)=&\zeta(1\!+\!s)^{-2(r+1)^2}\!\!\!\!\!\!\sum_{\substack{h_1h_2m_1m_2=\\k_1k_2n_1n_2}}\!\!\!\!\!\!\!\frac{d_r(h_1)d_r(h_2)\chi_D(h_2)d_r(k_1)d_r(k_2)\chi_D(k_2)\chi_D(m_2)\chi_D(n_2)}{(h_1h_2m_1m_2 k_1k_2n_1n_2)^{1/2+s}}\\
=&\zeta(1\!+\!s)^{-2(r+1)^2}\sum_{m=n}\frac{a_{r+1}(m)a_{r+1}(n)}{(mn)^{1/2+s}}\\
=&\zeta(1\!+\!s)^{-2(r+1)^2}\sum_{n=1}^\infty\frac{a_{r+1}(n)^2}{n^{1+2s}}.
\end{split}
\end{equation*}
It is not immediately obvious that this is convergent at $s\!=\!0$. However, we may arrive at such a conclusion by first replacing $\zeta(1+s)^{-2(r+1)^2}$ with $[\zeta_K(1+s)/L(1+s,\chi_D)]^{-2(r+1)^2}$, forming the Euler product of the zeta-function and the sum over $n$, and then separately checking the cases of split, inert and ramified primes (for which one has $\chi(p)=1$, $\chi(p)=-1$ and $\chi(p)=0$ respectively). Setting $s\!=\!0$ then gives $A_{r+1}$.

On returning to our formula for $J_{i,j}$, we now have
\[J_{i,j}=A_{r+1}\sum_{n\leq y}\frac{d_{2r^2}(n)}{n^{1+\alpha_3+\beta_3}}K_i(\alpha_3+\beta_1,\alpha_3+\beta_2)K_j(\beta_3\!+\alpha_1,\beta_3\!+\alpha_2)+O(\ml^{i+j+2r^2+4r-1}),\]
where
\[K_j(\alpha,\beta)=\frac{1}{2\pi i}\int_{(\ml^{-1})}\Big(\frac{y}{n}\Big)^{u}\zeta(1\!+\!\alpha\!+\!u)^r\zeta(1\!+\!\beta\!+\!u)^r\frac{du}{u^{j+1}}.\]
By Lemmas \ref{K lem} and \ref{d sum lem}, we have
\begin{align*}
J_{i,j}&=\frac{A_{r+1}(\log y)^{i+j+4r}}{((r-1)!)^4i!j!}\ \iiiint\limits_{\substack{0\leq x_1,x_2,x_3,x_4\leq 1\\x_1+x_2,x_3+x_4\leq 1}}(x_1x_2x_3x_4)^{r-1}(1\!-\!x_1\!-\!x_2)^i(1\!-\!x_3\!-\!x_4)^j\\
&\qquad\times\sum_{n\leq y}\frac{d_{2r^2}(n)}{n^{1+\alpha_3+\beta_3}}\Big(\frac{y}{n}\Big)^{-(\alpha_3+\beta_1) x_1-(\alpha_3+\beta_2) x_2-(\beta_3+\alpha_1)x_3-(\beta_3+\alpha_2)x_4}\\
&\qquad \qquad\times\Big(\frac{\log y/n}{\log y}\Big)^{i+j+4r}dx_1dx_2dx_3dx_4+O(\ml^{i+j+2r^2+4r-1})\\
&=\frac{A_{r+1}(\log y)^{i+j+2r^2+4r}}{(2r^2-1)!((r-1)!)^4i!j!}\int_0^1\iiiint\limits_{\substack{0\leq x_1,x_2,x_3,x_4\leq 1\\x_1+x_2,x_3+x_4\leq 1}}(x_1x_2x_3x_4)^{r-1}(1\!-\!x_1\!-\!x_2)^i\\
&\qquad \times(1\!-\!x_3\!-\!x_4)^jy^{-(\alpha_3+\beta_3)x-\big((\alpha_3+\beta_1) x_1+(\alpha_3+\beta_2) x_2+(\beta_3+\alpha_1)x_3+(\beta_3+\alpha_2)x_4\big)(1-x)}\\
&\qquad\qquad \times x^{2r^2-1}(1\!-\!x)^{i+j+4r}dx_1dx_2dx_3dx_4dx+O(\ml^{i+j+2r^2+4r-1})\\
&=\frac{A_{r+1}(\log y)^{i+j+2r^2+4r}}{(2r^2-1)!((r-1)!)^4i!j!}\qquad\int\!\!\!\int\!\!\!\!\!\!\!\!\!\!\!\!\!\!\!\!\!\!\!\!\!\int\limits_{\substack{0\leq x, x_1,x_2,x_3,x_4\leq 1\\x+x_1+x_2,x+x_3+x_4\leq 1}}\!\!\!\!\!\!\!\!\!\!\!\!\!\!\!\!\!\!\!\!\!\int\!\!\!\int(x_1x_2x_3x_4)^{r-1}(1\!-\!x\!-\!x_1\!-\!x_2)^i\\
&\qquad \times(1\!-\!x\!-\!x_3\!-\!x_4)^j y^{-(\alpha_3+\beta_3)x-(\alpha_3+\beta_1) x_1-(\alpha_3+\beta_2) x_2-(\beta_3+\alpha_1)x_3-(\beta_3+\alpha_2)x_4}\\
&\qquad\qquad \times x^{2r^2-1}dx_1dx_2dx_3dx_4dx+O(\ml^{i+j+2r^2+4r-1}),
\end{align*}
where in the last step we have used the substitutions $x_m(1-x)\mapsto x_m$, $m=1,2,3,4$. 
Inputting this in our formula for $I_1$  gives 
\begin{align*}I_1=&\ \widehat{\Phi}(0)T\frac{A_{r+1}(\log y)^{2r^2+4r}}{(2r^2-1)!((r-1)!)^4}\zeta(1\!+\!\alpha_1\!+\!\beta_1)\zeta(1\!+\!\alpha_2\!+\!\beta_2)\\
&\qquad\times\int\!\!\!\int\!\!\!\!\!\!\!\!\!\!\!\!\!\!\!\!\!\!\!\!\!\int\limits_{\substack{0\leq x, x_1,x_2,x_3,x_4\leq 1\\x+x_1+x_2,x+x_3+x_4\leq 1}}\!\!\!\!\!\!\!\!\!\!\!\!\!\!\!\!\!\!\!\!\!\int\!\!\!\int y^{-(\alpha_3+\beta_3)x-(\alpha_3+\beta_1) x_1-(\alpha_3+\beta_2) x_2-(\beta_3+\alpha_1)x_3-(\beta_3+\alpha_2)x_4}x^{2r^2-1}(x_1x_2x_3x_4)^{r-1}\\
&\qquad\qquad\qquad\times P(1\!-\!x\!-\!x_1\!-\!x_2)P(1\!-\!x\!-\!x_3\!-\!x_4)dxdx_1dx_2dx_3dx_4+O(T\ml^{2r^2+4r+1}).
\end{align*}

We now recall the formulae for the  three remaining $I$ terms;
\begin{align*}
I_2=&T^{-(\alpha_1+\beta_1)}I_1(-\beta_1,\alpha_2,-\alpha_1,\beta_2)+O(T\ml^{2r^2+4r+1}),\\
I_3=&(qT)^{-(\alpha_2+\beta_2)}I_1(\alpha_1,-\beta_2,\beta_1,-\alpha_2)+O(T\ml^{2r^2+4r+1}),\\
I_4=&T^{-(\alpha_1+\beta_1)}(qT)^{-(\alpha_2+\beta_2)}I_1(-\beta_1,-\beta_2,-\alpha_1,-\alpha_2)+O(T\ml^{2r^2+4r+1}).
\end{align*}
These give
\begin{align*}I(\ul{\alpha},\ul{\beta})=&\ 
\widehat{\Phi}(0)T\frac{A_{r+1}(\log y)^{2r^2+4r}}{(2r^2-1)!((r-1)!)^4}
\,\,\,\,\int\!\!\int\!\!\!\!\!\!\!\!\!\!\!\!\!\!\!\!\!\!\!\!\int\limits_{\substack{0\leq x, x_1,x_2,x_3,x_4\leq 1\\x+x_1+x_2,x+x_3+x_4\leq 1}}\!\!\!\!\!\!\!\!\!\!\!\!\!\!\!\!\!\!\!\!\int\!\!\int\ \  y^{-(\alpha_3+\beta_3)x-\alpha_3(x_1+x_2)-\beta_3(x_3+x_4)}\\
&\qquad \times U(\ul{x})x^{2r^2-1}(x_1x_2x_3x_4)^{r-1}P(1\!-\!x\!-\!x_1\!-\!x_2)P(1\!-\!x\!-\!x_3\!-\!x_4)
dxdx_1dx_2dx_3dx_4\\
&\qquad\qquad\quad +I^\prime_1+I^\prime_2+O(T\ml^{2r^2+4r+1})+O_\varepsilon(T^{1-\varepsilon}),
\end{align*}
where
\begin{equation*}
\begin{split}
U(\ul{x})=&\frac{y^{-\beta_1x_1-\beta_2x_2-\alpha_1x_3-\alpha_2x_4}}{(\alpha_1+\beta_1)(\alpha_2+\beta_2)}-\frac{T^{-(\alpha_1+\beta_1)}y^{\alpha_1x_1-\beta_2x_2+\beta_1x_3-\alpha_2x_4}}{(\alpha_1+\beta_1)(\alpha_2+\beta_2)}\\
&-\frac{(qT)^{-(\alpha_2+\beta_2)}y^{-\beta_1x_1+\alpha_2x_2-\alpha_1x_3+\beta_2x_4}}{(\alpha_1+\beta_1)(\alpha_2+\beta_2)}+\frac{T^{-(\alpha_1+\beta_1)}(qT)^{-(\alpha_2+\beta_2)}y^{\alpha_1x_1+\alpha_2x_2+\beta_1x_3+\beta_2x_4}}{(\alpha_1+\beta_1)(\alpha_2+\beta_2)}.
\end{split}
\end{equation*}
A short calculation shows that  $U(\ul{x})$ is given by
\begin{align*}
&y^{-\beta_1x_1-\beta_2x_2-\alpha_1x_3-\alpha_2x_4}\bigg(\frac{1-[Ty^{-(x_1+x_3)}]^{-(\alpha_1+\beta_1)}}{\alpha_1+\beta_1}\bigg)\bigg(\frac{1-[qTy^{-(x_2+x_4)}]^{-(\alpha_2+\beta_2)}}{\alpha_2+\beta_2}\bigg)\\
&\qquad=y^{-\beta_1x_1-\beta_2x_2-\alpha_1x_3-\alpha_2x_4}\ml^2\big(1-\vt(x_1+x_3)\big)\big(1-\vt(x_2+x_4)\big)\\
&\qquad\qquad\times\int_0^1\int_0^1 [Ty^{-(x_1+x_3)}]^{-(\alpha_1+\beta_1)t_1}[qTy^{-(x_2+x_4)}]^{-(\alpha_2+\beta_2)t_2}dt_1dt_2+O(\ml).
\end{align*}
Inputting this into $I(\ul{\alpha},\ul{\beta})$ gives the main term of Proposition \ref{prop:key}.

\subsection{Lower order terms}\label{lower order sec}
It remains to calculate $I^\prime_1, I^\prime_2$. We have
\begin{align*}I_1^\prime=&\varepsilon_{\chi_D}\sum_{h_1h_2,k_1k_2\leq y}\frac{d_r(h_1)d_r(h_2)\chi_D(h_2)P[h_1h_2]d_r(k_1)d_r(k_2)\chi_D(k_2)P[k_1k_2]}{(h_1h_2)^{1/2+\alpha_3}(k_1k_2)^{1/2+\beta_3}}\\
&\qquad\qquad\times\sum_{h_1h_2m_1m_2=qk_1k_2n_1n_2}\frac{\chi_D(m_2)\chi_D(m_2)}{m_1^{1/2-\beta_2}m_2^{1/2+\alpha_2}n_1^{1/2+\beta_1}n_2^{1/2-\alpha_1}}\\
&\qquad\qquad\qquad\qquad\int_{-\infty}^{\infty}\Phi\Big(\frac{t}{T}\Big)\Big(\frac{t}{2\pi}\Big)^{-\alpha_1}\Big(\frac{qt}{2\pi}\Big)^{-\beta_2}V\Big(\frac{m_1m_2n_1n_2}{t^2}\Big)dt.
\end{align*}
Similarly to before we may unfold the integrals in the polynomials $P[h_1h_2], P[k_1k_2]$ and use \eqref{Vx}. We then encounter an inner sum of the form  
\begin{align*}&\sum_{h_1h_2m_1m_2=qk_1k_2n_1n_2}\frac{d_r(h_1)d_r(h_2)\chi_D(h_2)d_r(k_1)d_r(k_2)\chi_D(k_2)}{(h_1h_2)^{1/2+\alpha_3}(k_1k_2)^{1/2+\beta_3}}\\
&\qquad\qquad\qquad\qquad\times \frac{\chi_D(m_2)\chi_D(m_2)}{m_1^{1/2-\beta_2+s}m_2^{1/2+\alpha_2+s}n_1^{1/2+\beta_1+s}n_2^{1/2-\alpha_1+s}}.
\end{align*}
From the Euler product this is given by
\begin{multline*}B(\ul{\alpha},\ul{\beta},u,v)\zeta(1\!+\!\alpha_3\!+\!\beta_3\!+\!u\!+\!v)^{2r^2}\zeta(1\!+\!\alpha_3\!-\!\alpha_1\!+\!u+\!s)^r\\
\times\zeta(1\!+\!\alpha_3\!+\!\beta_1\!+\!u+\!s)^r\zeta(1\!+\!\beta_3\!-\!\beta_1\!+\!v+\!s)^r\zeta(1\!+\!\beta_3\!+\!\alpha_2\!+\!v+\!s)^r,
\end{multline*}
where $B(\ul{\alpha},\ul{\beta},u,v)$ is an arithmetic factor convergent in some product of half-planes containing the origin. In comparison with \eqref{euler prod}, note the absence of the two zeta terms.
Accounting for this whilst shifting the contours of integration to $\Re(u)=\Re(v)= \ml^{-1}$ and trivially evaluating the integrals as in the analysis of the previous section gives 
\[I_1^\prime\ll (\log y)^{2r^2+4r}.\]

The same bound holds for $I_2^\prime$. 

\section{Acknowledgements}
The authors thank the referee for helpful comments and suggestions.

\section{Funding}
This work is supported by the Leverhulme Trust [grant number RPG-049 to H.M.B.]; the Research Council of Norway [grant number 227768 to W.H.]; and by an Association for Women in Mathematics Travel Grant, funded through the Nation Science Foundation [grant number DMS-1153905 to C.L.T.].


\begin{thebibliography}{9}

\bibitem{Betal}O. Barrett, B. McDonald, S. J. Miller, P. Ryan, C. L. Turnage-Butterbaugh, K. Winsor, {\it Gaps between zeros of GL(2) $L$-functions}, J. Math. Anal. Appl., {\bf 429} (2015), 204--232.

\bibitem{BBLR} S.~Bettin, H.~M.~Bui, Xiannan~Li, M.~Radziwi\l\l, {\it A quadratic divisor problem and moments of the Riemann zeta-function}, preprint.

\bibitem{Bredberg} J.~Bredberg, \textit{On large gaps between consecutive zeros, on the critical line, of some zeta-functions}, DPhil. thesis, University of Oxford (2011), see also http://arxiv.org/abs/1101.3197

\bibitem{Bui} H. M. Bui, {\it Large gaps between consecutive zeros of the Riemann zeta-function}, J. Number Theory {\bf 131} (2011), 67--95.

\bibitem{Bui2} H. M. Bui, {\it Large gaps between consecutive zeros of the Riemann zeta-function. II}, Acta Arith. {\bf 165} (2014), 101--122.

\bibitem{HBgaps}H.~M.~Bui, M. B. Milinovich, \emph{Large gaps between consecutive zeros of the Riemann zeta-function. III}, preprint, see also http://arxiv.org/abs/1410.3635.

\bibitem{BMN} H. M. Bui, M. B. Milinovich, N. C. Ng, {\it A note on the gaps between consecutive zeros of the Riemann zeta-function} Proc. Amer. Math. Soc. {\bf 138}, no. 12 (2010), 4167--4175. 

\bibitem{CGG} J. B. Conrey, A. Ghosh, S. M. Gonek, {\it A note on gaps between zeros of the zeta function}, Bull. London Math. Soc. {\bf 16}, no. 4 (1984), 421--424. 

\bibitem{CGG2} J. B. Conrey, A. Ghosh, S. M. Gonek, {\it Large gaps between zeros of the zeta-function}, Mathematika {\bf 33} (1986), 212--238.

\bibitem{FW} S. Feng and X. Wu, {\it On gaps between zeros of the Riemann zeta-function}, J. Number Theory {\bf 132}, no. 7 (2012), 1385--1397.

\bibitem{Hall1} R. R. Hall, {\it A new unconditional result about large spaces between zeta zeros}, Mathematika {\bf 52} (2005), 101--113.

\bibitem{Hall} R.~R.~Hall, \emph{The behaviour of the Riemann zeta-function on the critical line}, Mathematika {\bf 46} (1999), 281--313.

\bibitem{Hall2}R.~R.~Hall, \emph{Generalized Wirtinger inequalities, random matrix theory, and the zeros of the Riemann zeta function}, J. Number Theory {\bf 97} (2002), 397--409.

\bibitem{Heap} W.~Heap, {\it The twisted second moment of the Dedekind zeta function of a quadratic field}, Int. J. Number Theory {\bf 10} (2014), 235--281.

\bibitem{Hughes thesis}C.~P.~Hughes, \emph{On the characteristic polynomial of a random unitary matrix and the Riemann zeta function}, PhD thesis, University of Bristol (2001).

\bibitem{KS}J.~Keating,  N.~C.~Snaith, \emph{Random matrix theory and $\zeta(\tfrac12+it)$}, Comm. Math. Phys. {\bf 214} (2000), 57--89.

\bibitem{MO} H. L. Montgomery, A. M. Odlyzko,  \emph{Gaps between zeros of the zeta function}, Topics in classical number theory, {\bf Vol. I, II} (Budapest, 1981), 1079--1106, Colloq. Math. Soc. J\'anos Bolyai, {\bf 34}, North-Holland, Amsterdam, 1984. 

\bibitem{Mu}J. Mueller, {\it On the difference between consecutive zeros of the Riemann zeta-function}, J. Number Theory {\bf 14} (1982), 327--331.

\bibitem{Ng} N.~Ng, \emph{Large gaps between the zeros of the {R}iemann zeta function}, J.
  Number Theory \textbf{128}, no.~3 (2008), 509--556. 
  
\bibitem{SS}J. Steuding, R. Steuding, \emph{Gaps between consecutive zeros of the zeta-function on the critical line and conjectures from random matrix theory},  Adv. Stud. Pure Math. {\bf 49} (2007), 421--432.

\bibitem{CTB} C.~L.~Turnage-Butterbaugh, \emph{Gaps between zeros of Dedekind zeta-functions of quadratic number fields}, J. Math. Anal. Appl. {\bf 418} (2014), 100--107.

\end{thebibliography}
\end{document}